\documentclass[reqno]{amsart}
\usepackage{amsmath}
\usepackage{amssymb}
\usepackage{amsfonts}
\usepackage{mathtools}
\usepackage[ruled,vlined,linesnumbered,norelsize,algosection]{algorithm2e}
\usepackage{float}
\usepackage{enumitem}
\usepackage{chngcntr}
\usepackage{url}

\floatstyle{ruled}
\newfloat{myalgorithm}{thp}{lop}
\floatname{myalgorithm}{Algorithm}

\counterwithin{myalgorithm}{section}

\newtheorem{thm}{Theorem}
\newtheorem{prop}[thm]{Proposition}
\newtheorem{lem}[thm]{Lemma}
\theoremstyle{remark}
\newtheorem{rem}[thm]{Remark}

\numberwithin{equation}{section}
\numberwithin{thm}{section}

\newcommand{\ZZ}{\mathbb{Z}}
\DeclareMathOperator{\FF}{\mathsf{F}}
\DeclareMathOperator{\MM}{\mathsf{M}}

\renewcommand{\leq}{\leqslant}
\renewcommand{\geq}{\geqslant}
\newcommand{\defeq}{\coloneqq}
\DeclareMathOperator{\ord}{ord}

\begin{document}

\title[Exponent one-fifth integer factorisation]{An exponent one-fifth algorithm for deterministic integer factorisation}
\author{David Harvey}
\email{d.harvey@unsw.edu.au}
\address{School of Mathematics and Statistics, University of New South Wales, Sydney NSW 2052, Australia}

\begin{abstract}
Hittmeir recently presented a deterministic algorithm that provably computes
the prime factorisation of a positive integer~$N$
in $N^{2/9+o(1)}$ bit operations.
Prior to this breakthrough, the best known complexity bound
for this problem was $N^{1/4+o(1)}$,
a result going back to the 1970s.
In this paper we push Hittmeir's techniques further,
obtaining a rigorous, deterministic factoring algorithm
with complexity $N^{1/5+o(1)}$.
\end{abstract}

\maketitle

\section{Introduction}
\label{sec:intro}

Let $\FF(N)$ denote the time required to compute the prime factorisation
of an integer $N \geq 2$.
By ``time'' we mean ``number of bit operations'',
or more precisely, the number of steps performed by a deterministic
Turing machine with a fixed, finite number of linear tapes
\cite{Pap-complexity}.
All integers are assumed to be encoded in the usual binary representation.

In this paper we prove the following result:
\begin{thm}
\label{thm:main}
  There is an integer factorisation algorithm achieving
    \[ \FF(N) = O(N^{1/5} \log^{16/5} N). \]
\end{thm}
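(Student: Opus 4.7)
The plan is to combine Strassen's fast polynomial-evaluation scheme with a sieving refinement pushing Hittmeir's idea further. By the standard ``factor once, recurse'' reduction it suffices to exhibit a single non-trivial divisor of a composite $N$ in time $O(N^{1/5}\log^c N)$ for an appropriate exponent $c$. I would begin by trial-dividing $N$ by all primes up to a threshold $N^{\gamma}$ in $\tilde O(N^{\gamma})$ bit operations, so that from this point on every prime divisor of $N$ exceeds~$N^{\gamma}$.

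Recall Strassen's scheme: form $f(x)=\prod_{i=1}^{L}(x+i)\bmod N$ via a product tree, then evaluate $f$ at $0, L, 2L, \ldots, (L-1)L$ by fast multi-point evaluation. Since $\gcd(f(jL),N)$ reveals any prime factor of $N$ in $(jL,(j+1)L]$, taking $L\approx N^{1/4}$ covers $[1,N^{1/2}]$ in $\tilde O(N^{1/4})$ ring operations and so recovers the classical $N^{1/4+o(1)}$ bound. To break through this barrier, I would restrict the search to arithmetic progressions $\{r+kM\}_{k\geq 0}$ with $r$ running over a carefully constructed set $S\subseteq\ZZ/M\ZZ$ of admissible residues for a divisor of $N$, where $M$ is a smooth auxiliary modulus. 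The set $S$ is produced by combining, via the Chinese Remainder Theorem, the possible residues of a divisor of $N$ modulo each prime power dividing $M$; this is the sieving mechanism Hittmeir used to reach~$2/9$, and the contribution here is presumably a refinement yielding a smaller~$S$ at a correspondingly larger~$M$. Running Strassen's evaluation inside each progression and balancing the three parameters $\gamma$, $M$, $L$ so that trial division, enumeration of~$S$, and polynomial evaluation all cost $\tilde O(N^{1/5})$ would then give the desired bound.

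The main technical obstacle is the analysis of~$S$: one must bound $|S|$ sharply enough to beat the $2/9$ threshold and then list~$S$ in essentially that same time budget. I expect this to turn on a finer divisor count of $N$ within small residue classes modulo the prime powers composing~$M$, together with a careful accounting of how many CRT combinations actually correspond to genuine divisors of~$N$, rather than to spurious residue combinations. Once that count is established, the $\log^{16/5} N$ factor drops out from the standard bookkeeping: each of the $\tilde O(N^{1/5})$ ring operations modulo $N$ costs $\tilde O(\log N)$ bit operations, with fast polynomial multiplication and multi-point evaluation contributing their usual logarithmic overhead on top.
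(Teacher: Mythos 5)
There is a genuine gap, and it sits exactly where you placed your ``main technical obstacle''. Your entire route to the exponent $1/5$ rests on the existence of an admissible residue set $S \subseteq \ZZ/M\ZZ$ that is simultaneously small enough and cheaply enumerable enough to beat $2/9$, and you give no construction, no bound on $|S|$, and no balancing of $\gamma$, $M$, $L$ that actually yields $N^{1/5}$; you only express the expectation that a ``finer divisor count in residue classes'' will do it. Nothing in the literature, and nothing in the paper, supports that route: restricting Strassen-type evaluation to residue classes of the unknown divisor is not how Hittmeir reached $2/9$, and it is not how the $1/5$ bound is obtained. Hittmeir's $N^{2/9+o(1)}$ algorithm (and this paper) is built on Lehman's observation that a semiprime $N=pq$ admits small $a,b$ with $ab\leq r$ such that $aq+bp$ lies in a short interval above $(4abN)^{1/2}$, combined with the Fermat congruence $\alpha^{aq+bp}\equiv\alpha^{aN+b}\pmod p$. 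The search for $aq+bp$ is then a baby-step/giant-step collision search: one compares the baby steps $\alpha^i$, $0\leq i<m$, against giant steps $\alpha^{-jm}\alpha^{aN+b-\lceil(4abN)^{1/2}\rceil}$ ranging over all Lehman pairs $(a,b)$, and collisions modulo $p$ (rather than modulo $N$) are detected by forming the product tree of the giant-step values and evaluating it at the geometric progression $\alpha^i$ via Bluestein's algorithm, taking a GCD with $N$ at each value. The paper's improvement over $2/9$ is precisely that a \emph{single} such BSGS sweep covers all $ab\leq r$ at once with $r\approx N^{1/5}$, giving a square-root speedup over the roughly $N^{2/5}$ candidates; the specific exponent $16/5$ on the logarithm comes from the choices $r\approx N^{1/5}/\lg^{4/5}N$ and $m\approx N^{1/5}\lg^{6/5}N$, not from generic bookkeeping.

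Your sketch also omits two ingredients without which even the correct strategy does not go through deterministically. First, the BSGS search needs an explicit $\alpha\in\ZZ_N^*$ of provably large order (order exceeding the baby-step count $m$, and in fact $>N^{2/5}$ in the paper, obtained via Hittmeir's deterministic large-order algorithm); your proposal has no counterpart to this, and without it the collision structure cannot be certified. Second, the reduction to the case where $N$ is prime or a semiprime with $p\geq (N/r)^{1/2}$ is needed before Lehman's lemma applies; this is done by a Strassen-type search for divisors up to $(N/r)^{1/2}\approx N^{2/5}$, which costs $N^{1/5+o(1)}$ and is not the naive trial division up to $N^{\gamma}$ you describe. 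As written, your argument establishes no bound better than the classical $N^{1/4+o(1)}$, because every step that would push below it is conjectural.
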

One may show that the space complexity of this algorithm is
$O(N^{1/5} \log^{11/5} N)$,
but we will not give the details of this analysis.

We emphasise that the new algorithm is \emph{deterministic},
the complexity bound is \emph{rigorously established},
and the algorithm runs on a \emph{classical computer}.
If one is willing to relax these conditions,
better complexity bounds are known:
\begin{itemize}[label=--,labelwidth=0.7cm,leftmargin=\labelwidth,itemsep=0.2em,topsep=\itemsep]
  \item
  The class group relations method of Lenstra and Pomerance \cite{LP-rigorous}
  provably runs in expected time
    \[ \exp\big((1 + o(1)) (\log N)^{1/2} (\log \log N)^{1/2}\big). \]
  This is the best known rigorous bound for a probabilistic factoring algorithm.
  
  \item
  The general number field sieve is conjectured to run in time
    \[ \exp\big(((64/9)^{1/3} + o(1)) (\log N)^{1/3} (\log \log N)^{2/3}\big). \]
  See \cite[\S6.2]{CP-primes} for a discussion of the heuristics
  supporting this conjecture.
  This algorithm is responsible for all recent record-breaking factorisations,
  such as the factorisation of the 250-digit challenge number RSA-250
  \cite{BGGHTZ-RSA250}.
  
  \item Shor's factoring algorithm runs in polynomial time,
  specifically $(\log N)^{2+o(1)}$, on a quantum computer
  \cite{Sho-factoring}.
\end{itemize}

It is also worth mentioning the deterministic algorithm of
Shanks \cite{Sha-classnumber},
which achieves $\FF(N) = N^{1/5+o(1)}$,
conditional on the (still unproved) Generalised Riemann Hypothesis.
According to \cite[\S5.6.4]{CP-primes},
the complexity of this algorithm is actually $N^{1/5} \log^{4+o(1)} N$,
which is slightly worse than our bound in Theorem \ref{thm:main}.
  
The literature on integer factorisation is vast.
For further pointers, see \cite{CP-primes}.
For the rest of the paper, we focus on algorithms that are
deterministic, rigorous, and classical.

The naive trial division method,
i.e., checking all possible divisors up to $N^{1/2}$,
runs in time $N^{1/2+o(1)}$.
The first algorithm faster than this was that of Lehman \cite{Leh-factoring},
who obtained $\FF(N) = N^{1/3+o(1)}$.
Lehman's main theorem shows that if $N$ is a semiprime,
i.e., $N = pq$ for distinct primes $p$ and $q$,
then there exists a pair of ``small'' integers $a$ and $b$ such that
$aq + bp$ lies in a certain short interval (depending on $a$ and $b$).
His algorithm examines every integer in this interval,
for every $a$ and $b$ in a suitable range,
and succeeds in factoring $N$ as soon as it encounters $aq + bp$.

Shortly after Lehman's work appeared,
Pollard \cite{Pol-factorization} and Strassen \cite{Str-factoring}
independently published two closely related algorithms achieving
$\FF(N) = N^{1/4+o(1)}$.
Both of these algorithms depend heavily on the FFT
(fast Fourier transform).
Strassen's method was revisited by Bostan, Gaudry and Schost
\cite{BGS-recurrences},
and then again by Costa and the present author \cite{CH-factor},
who achieved
\begin{equation}
\label{eq:CH-bound}
  \FF(N) = O\left(\frac{N^{1/4} \log^2 N}{(\log \log N)^{1/2}}\right).
\end{equation}
(The results in \cite{BGS-recurrences} and \cite{CH-factor}
were actually stated in terms of the function $\MM(n)$,
the cost of multiplying $n$-bit integers.
It is now known that $\MM(n) = O(n \log n)$ \cite{HvdH-nlogn},
and at the risk of anachronism,
we will use this result in our historical discussion to simplify
several complexity statements from the literature.)

Another family of algorithms achieving $\FF(N) = N^{1/4+o(1)}$
is based on Coppersmith's method \cite{Cop-lowexponent}
for finding small roots of polynomial equations modulo~$N$
(see also \cite[\S5]{Ber-smallheight}).
These algorithms rely on fast lattice reduction techniques such as LLL.
Their complexity is presumably of the form $O(N^{1/4} \log^k N)$
for some small positive integer $k$,
but the author is not aware of a precise statement in the literature.
This approach has the advantage that the space complexity is only
$(\log N)^{O(1)}$,
comparing favourably to the $N^{1/4+o(1)}$ space complexity of the
Pollard--Strassen method.

In 2016, Hittmeir established the bound
\begin{equation}
\label{eq:hittmeir-bound1}
  \FF(N) = O\left(N^{1/4} \exp\left(\frac{-C_1 \log N}{\log (C_2 \log N)}\right) \log^2 N\right),
\end{equation}
for certain explicit constants $C_1, C_2 > 0$ \cite{Hit-BSGS}.
This bound is still $N^{1/4+o(1)}$,
but improves on the ``optimised Strassen bound'' \eqref{eq:CH-bound} by a superpolynomial factor,
i.e., by an arbitrarily large power of $\log N$.
Hittmeir's algorithm relies on an trick that seems to have first been suggested
by Pollard \cite[\S3]{Pol-montecarlo}:
if $N$ is a semiprime $pq$, then Euler's theorem implies that
\begin{equation}
\label{eq:hittmeir-congruence}
  \alpha^{p+q} \equiv \alpha^{N+1} \pmod N
\end{equation}
for any $\alpha \in \ZZ_N^*$.
One may therefore try to compute $p+q$ (and hence factor~$N$)
by solving $\alpha^x \equiv \alpha^{N+1} \pmod N$,
using for instance Shanks' BSGS (baby-step/giant-step) algorithm
for computing discrete logarithms~\cite{Sha-classnumber}.
McKee and Pinch~\cite[\S2]{MP-oldandnew} turn this idea into a deterministic
factoring algorithm with $\FF(N) = N^{1/3+o(1)}$,
but Hittmeir squeezes much more out of it.
Among other things, he shows that the relation $pq = N$ forces certain
restrictions on the residue of $p+q$ modulo various small primes,
which can be used to speed up the BSGS search.

More recently, in a spectacular preprint \cite{Hit-timespace},
Hittmeir presented an algorithm achieving
  \[ \FF(N) = O(N^{2/9} \log^{11/3} N). \]
Notice that $2/9 = 0.222\ldots < 0.25 = 1/4$.
This result represents the first exponential speedup for the problem of
integer factorisation since the 1970s.
The algorithm has many features in common with \cite{Hit-BSGS},
but instead of searching for $p+q$,
it searches for the linear combinations $aq + bp$
that arise in Lehman's algorithm.
The replacement for \eqref{eq:hittmeir-congruence} in this context
is the congruence
  \[ \alpha^{aq + bp} \equiv \alpha^{aN + b} \pmod p, \]
which follows from Fermat's little theorem.
His algorithm actually examines many more candidates than Lehman's algorithm,
but the complexity is drastically reduced thanks to the use of the BSGS search.

The algorithm described in the present paper achieves a further exponential
speedup, from $N^{2/9+o(1)}$ to $N^{1/5+o(1)} = N^{0.2+o(1)}$.
It is based largely on Hittmeir's algorithm from \cite{Hit-timespace}.
The main difference, and the source of the exponential speedup,
is that we apply the BSGS search technique much more aggressively.
Whereas Hittmeir breaks up the search space into chunks and
applies the BSGS search to each chunk separately,
we show how to sweep through the entire space with a single BSGS search.
For a more quantitative discussion of the
relationship between the two algorithms,
see Remark \ref{rem:discussion}.

We also make one other small improvement to Hittmeir's algorithm:
we replace a certain multipoint evaluation step with an invocation of
Bluestein's algorithm \cite{Blu-dft} (see Algorithm \ref{algo:collisions}).
This saves a further factor of $(\log N)^{O(1)}$ overall.

\section{Preliminaries}

In this section we introduce various notation and terminology,
and recall several ingredients from the literature that will be needed later.
All results stated in this section without specific references may be found
in standard textbooks on computer algebra, such as
\cite{vzGG-compalg3} or \cite{BZ-mca}.

For $n$ a positive integer, we define
$\lg n \defeq \lceil \log n / \log 2 \rceil$ if $n \geq 2$,
and $\lg 1 \defeq 1$.
Clearly $\lg n = \Theta(\log n)$ as $n \to \infty$.

Given a list of $n \geq 1$ elements of bit size $\beta \geq 1$,
we may sort the list using the ``merge sort'' algorithm
in time $O(n \beta \lg n)$ \cite{Knu-TAOCP3}.
Here we assume that the elements belong to some totally ordered set,
and that we may compare a pair of elements in time $O(\beta)$.

\subsection{Integer arithmetic}

Let $n \geq 1$, and assume that we are given integers $x$ and $y$ such that
$|x|, |y| \leq 2^n$.
\begin{itemize}
  \item
  We may compute $x + y$ and $x - y$ in time $O(n)$.

  \item
  We write $\MM(n)$ for the cost of computing the product $xy$.
  As mentioned in the Introduction, we may take
  $\MM(n) = O(n \lg n)$~\cite{HvdH-nlogn}.

  \item 
  If $y > 0$, we may compute the quotients $\lfloor x/y \rfloor$ and
  $\lceil x/y \rceil$ in time $O(\MM(n))$.
  We may therefore also compute the residue of $x$ modulo $y$
  in the interval $[0, y)$ in time $O(\MM(n))$.
  
  \item
  More generally, for a fixed positive rational number $u/v$,
  and assuming that $x, y > 0$, we may compute
  $\lfloor (x/y)^{u/v} \rfloor$ and $\lceil (x/y)^{u/v} \rceil$
  in time $O(\MM(n))$.
  
  \item
  Using the ``half-GCD'' algorithm,
  we may compute $g \defeq \gcd(x, y)$,
  and if desired find integers $u$ and $v$ such that $ux + vy = g$,
  in time $O(\MM(n) \lg n) = O(n \lg^2 n)$.
  In particular, if $\gcd(x, y) = 1$,
  we may compute the inverse of $x$ modulo $y$
  in time $O(\MM(n) \lg n)$.
\end{itemize}

Let $N \geq 2$.
We write $\ZZ_N$ for the ring $\ZZ/N\ZZ$ of integers modulo $N$.
Elements of $\ZZ_N$ will always be represented by their residues in the
interval $[0, N)$.
In particular, an element of $\ZZ_N$ occupies at most $\lg N$ bits of storage.
We write $\ZZ_N^*$ for the group of units of $\ZZ_N$,
i.e., the subset of those $x \in \ZZ_N$ for which $\gcd(x, N) = 1$.

\begin{itemize}
  \item
  Given $x, y \in \ZZ_N$,
  we may compute $x \pm y \in \ZZ_N$ in time $O(\lg N)$,
  and we may compute $xy \in \ZZ_N$ in time $O(\MM(\lg N)) = O(\lg N \lg \lg N)$.

  \item
  Given $x \in \ZZ_N$ and an integer $m \geq 0$,
  we may compute $x^m \in \ZZ_N$ in time $O(\MM(\lg N) \lg m)$,
  using the ``repeated squaring'' algorithm.

  \item
  Given $x \in \ZZ_N$, we may test whether $x \in \ZZ_N^*$
  in time $O(\MM(\lg N) \lg \lg N)$, by computing $\gcd(x, N)$.
  If this GCD is not $1$ or $N$,
  and if $N = pq$ is a semiprime,
  then we immediately recover $p$ and $q$ in time $O(\MM(\lg N))$.
\end{itemize}

\subsection{Polynomial arithmetic}

We always represent a polynomial in $\ZZ_N[x]$ by a list of its coefficients.
For $d \geq 1$, let $\MM_N(d)$ denote the cost of multiplying two polynomials
in $\ZZ_N[x]$ of degree at most $d$.
\begin{lem}
\label{lem:polynomial-bound}
  If $d = O(N)$ then
  \begin{equation}
  \label{eq:polynomial-bound}
  \MM_N(d) = O(d \lg^2 N).
  \end{equation}
\end{lem}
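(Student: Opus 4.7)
The plan is to reduce polynomial multiplication in $\ZZ_N[x]$ to a single integer multiplication via Kronecker substitution, and then apply the $\MM(n) = O(n \lg n)$ bound recalled earlier in the section.

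Given $f, g \in \ZZ_N[x]$ of degree at most $d$, first lift the coefficients to integers in $[0, N)$. The coefficients of the integer product $fg$ (before reduction mod $N$) are nonnegative and bounded above by $(d+1) N^2$, so each fits in $B \defeq \lceil \lg((d+1)N^2)\rceil$ bits. Since $d = O(N)$, we have $B = O(\lg N)$. Next, evaluate $F \defeq f(2^B)$ and $G \defeq g(2^B)$ by simple bit-shifting and concatenation; these are nonnegative integers of bit size at most $(d+1)B = O(d \lg N)$, obtainable in linear time in their output size.

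Now compute the integer product $FG$ using the fast multiplication algorithm, at a cost of
\[
\MM\bigl(O(d \lg N)\bigr) = O\bigl(d \lg N \cdot \lg(d \lg N)\bigr) = O(d \lg^2 N),
\]
where the final equality uses $d = O(N)$ so that $\lg(d \lg N) = O(\lg N)$. Because the coefficients of the integer polynomial $fg$ fit within the $B$-bit slots of $FG$, they can be read off directly in time $O(d \lg N)$. Finally, reduce each of the $2d+1$ coefficients modulo $N$ at cost $O(\MM(\lg N))$ apiece, for a total of $O(d \MM(\lg N)) = O(d \lg N \lg \lg N)$, which is absorbed into $O(d \lg^2 N)$.

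The argument is essentially bookkeeping once the right reduction is chosen; the only step that requires a moment of care is verifying that the hypothesis $d = O(N)$ suffices to absorb the $\lg d$ factor arising inside $\MM(\cdot)$ into $\lg N$, so that the final bound is clean. No step presents a genuine obstacle.
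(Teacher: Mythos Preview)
Your proof is correct and follows essentially the same approach as the paper: Kronecker substitution to reduce to a single integer multiplication of size $O(d \lg N)$, then the $\MM(n) = O(n \lg n)$ bound together with $d = O(N)$ to simplify to $O(d \lg^2 N)$. You supply more detail than the paper (which just cites the Kronecker substitution bound from the literature), but the argument is the same.
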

\begin{proof}
Using Kronecker substitution, i.e., packing the coefficients into large
integers and then multiplying these integers
(see \cite[Cor.~8.27]{vzGG-compalg3}), we obtain
  \[ \MM_N(d) = O(\MM(d (\lg N + \lg d))). \]
Under the hypothesis $d = O(N)$ this simplifies to $O(d \lg^2 N)$.
\end{proof}
\begin{rem}
  In the above lemma,
  the hypothesis ``$d = O(N)$'' means that for every constant $C > 0$,
  the lemma holds for all $d$ and $N$ in the region $d < CN$,
  where the implied big-$O$ constant in \eqref{eq:polynomial-bound}
  may depend on $C$.
  For the rest of the paper, a similar remark applies whenever we use
  the big-$O$ notation in this way.
\end{rem}

\begin{lem}
\label{lem:product-tree}
  Let $n \geq 1$ with $n = O(N)$.
  Given as input $v_1, \ldots, v_n \in \ZZ_N$,
  we may compute the polynomial
  $f(x) = (x - v_1) \cdots (x - v_n) \in \ZZ_N[x]$
  in time
    \[ O(n \lg^3 N). \]
\end{lem}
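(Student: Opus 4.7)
\medskip

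The plan is to use the standard subproduct-tree construction and to charge the cost level by level, applying Lemma \ref{lem:polynomial-bound} to bound the cost of each multiplication. Concretely, I would build a balanced binary tree of depth $\lceil \lg n \rceil$ whose leaves are the $n$ linear polynomials $x - v_1, \ldots, x - v_n$. At each internal node, I would form the product of its two children. After padding $n$ up to the next power of two with trivial factors $1$ (or simply promoting unpaired nodes unchanged to the next level), the root of the tree is the desired polynomial $f(x)$.

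First I would fix notation: write $L \defeq \lceil \lg n \rceil$, so that there are $L = O(\lg N)$ levels (this uses $n = O(N)$). At level $k$ (counting from the leaves, $k = 0, 1, \ldots, L$), the tree contains at most $\lceil n / 2^k \rceil$ polynomials, each of degree at most $2^k \leq n = O(N)$. Obtaining level $k{+}1$ from level $k$ requires at most $\lceil n / 2^{k+1} \rceil$ multiplications of polynomials of degree at most $2^k$. By Lemma \ref{lem:polynomial-bound}, each such multiplication costs $O(2^k \lg^2 N)$, so the total cost at level $k$ is
\[
O\!\left( \frac{n}{2^{k+1}} \cdot 2^k \lg^2 N \right) = O(n \lg^2 N).
\]
Summing over the $O(\lg N)$ levels yields the claimed bound $O(n \lg^3 N)$.

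The only mild subtlety is checking that the hypothesis $d = O(N)$ in Lemma \ref{lem:polynomial-bound} applies at every level: since the polynomials produced at level $k$ have degree at most $2^k \leq 2^L \leq 2n = O(N)$, this is immediate from the assumption $n = O(N)$. There are no real obstacles here; the argument is a routine level-by-level accounting, and the main point is simply to invoke the polynomial-multiplication bound \eqref{eq:polynomial-bound} and observe that the product $n/2^k \cdot 2^k$ telescopes to $n$ at every level, leaving one logarithmic factor from the number of levels and two from the cost of a single multiplication.
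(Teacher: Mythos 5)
Your argument is correct and is essentially the paper's own proof: a product tree with $O(\lg N)$ levels, where Lemma \ref{lem:polynomial-bound} gives cost $O(n \lg^2 N)$ per level, hence $O(n \lg^3 N)$ in total. You simply spell out the level-by-level accounting and the degree-bound check that the paper leaves implicit.
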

\begin{proof}
  This is a standard application of a product tree:
  starting with the linear polynomials $x - v_i$,
  we repeatedly multiply pairs of polynomials together,
  with the degree doubling at each level.
  There are $O(\lg n) = O(\lg N)$ levels,
  and Lemma \ref{lem:polynomial-bound} implies that
  the cost at each level is $O(n \lg^2 N)$.
  For further details, see for instance \cite[Algorithm 10.3]{vzGG-compalg3}.
\end{proof}

\begin{lem}
\label{lem:bluestein}
  Given as input an element $\alpha \in \ZZ^*_N$,
  positive integers $m, n = O(N)$,
  and $f \in \ZZ_N[x]$ of degree $n$,
  we may compute $f(1), f(\alpha), \ldots, f(\alpha^{m-1}) \in \ZZ_N$
  in time
    \[ O((n + m) \lg^2 N). \]
\end{lem}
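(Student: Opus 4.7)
The plan is to use the classical Bluestein trick, which rewrites evaluation of $f$ at a geometric progression as a single polynomial multiplication. Writing $f(x) = \sum_{j=0}^n c_j x^j$ and invoking the integer identity $ij = \binom{i+j}{2} - \binom{i}{2} - \binom{j}{2}$, the hypothesis $\alpha \in \ZZ_N^*$ lets us write
\[ f(\alpha^i) = \alpha^{-\binom{i}{2}} \sum_{j=0}^n \bigl(c_j \alpha^{-\binom{j}{2}}\bigr) \cdot \alpha^{\binom{i+j}{2}}, \qquad 0 \leq i \leq m-1. \]
The inner sum depends on $i$ and $j$ only through $i+j$, so it is a convolution.

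First I would compute $\beta \defeq \alpha^{-1} \in \ZZ_N$ via the extended GCD, and then tabulate $b_k \defeq \alpha^{\binom{k}{2}}$ for $0 \leq k \leq n+m-1$ and $b'_j \defeq \beta^{\binom{j}{2}}$ for $0 \leq j \leq n$. Since $\binom{k+1}{2} - \binom{k}{2} = k$, each table is built incrementally by a recurrence of the form $b_{k+1} = b_k \cdot \alpha^k$ (with the auxiliary powers $\alpha^k$ updated in lockstep), using $O(n+m)$ multiplications in $\ZZ_N$ and thus time $O((n+m) \lg N \lg \lg N)$. Set also $a_j \defeq c_j b'_j \in \ZZ_N$ within the same budget.

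Next I would introduce
\[ A(x) \defeq \sum_{j=0}^n a_{n-j} x^j, \qquad B(x) \defeq \sum_{k=0}^{n+m-1} b_k x^k, \]
with the indexing chosen so that the coefficient of $x^{n+i}$ in $A(x) B(x)$ is $\sum_{j=0}^n a_j b_{i+j}$ for $0 \leq i \leq m-1$. Both polynomials have degree $O(n+m) = O(N)$, so Lemma \ref{lem:polynomial-bound} computes the product in time $O((n+m) \lg^2 N)$. Reading off the coefficients of $x^n, x^{n+1}, \ldots, x^{n+m-1}$ and multiplying each by the tabulated value $b'_i$ produces the required outputs $f(\alpha^i)$.

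The polynomial multiplication dominates, giving the claimed $O((n+m) \lg^2 N)$. The main points that require care are (i) setting up the reversed convolution so that the window of coefficients $x^n, \ldots, x^{n+m-1}$ correctly encodes the sums $\sum_{j=0}^n a_j b_{i+j}$, and (ii) building the tables $(b_k)$ and $(b'_j)$ via the incremental recurrence rather than by a separate exponentiation for each entry, since the latter would introduce a spurious factor of $\lg N$ and break the target bound.
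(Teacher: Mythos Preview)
Your proof is correct and follows essentially the same Bluestein/chirp-$z$ approach as the paper; the only cosmetic difference is the choice of binomial identity (you use $ij = \binom{i+j}{2} - \binom{i}{2} - \binom{j}{2}$, while the paper uses $ij = \binom{i}{2} + \binom{-j}{2} - \binom{i-j}{2}$), which turns the paper's convolution into a correlation that you then handle by reversing $A$. One small slip: you tabulate $b'_j = \beta^{\binom{j}{2}}$ only for $0 \leq j \leq n$, but in the final step you need $b'_i$ for $0 \leq i \leq m-1$, so the range should extend to $\max(n, m-1)$ --- this does not affect the stated bound.
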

\begin{proof}
  We use a variant of Bluestein's trick \cite{Blu-dft}
  (following \cite[Lem.~3]{HvdH-zmatmult}).
  Let $f(x) = \sum_{j=0}^n f_j x^j$.
  Then the identity $ij = \binom{i}{2} + \binom{-j}{2} - \binom{i-j}{2}$
  implies that
  \begin{equation}
  \label{eq:eval-f}
    f(\alpha^i) = \sum_{j=0}^n f_j \alpha^{ij} = h_i \sum_{j=0}^n f'_j g_{i-j},
  \end{equation}
  where
    \[ h_i  \defeq \alpha^{\binom{i}{2}}, \qquad
       f'_j \defeq \alpha^{\binom{-j}{2}} f_j, \qquad
       g_k  \defeq \alpha^{-\binom{k}{2}}. \]
  Using the identities $\binom{i+1}{2} = \binom{i}{2} + i$ and
  $\binom{-j}{2} = \binom{j+1}{2}$,
  we may compute $h_0, \ldots, h_{m-1}$, $f'_0, \ldots, f'_n$ and
  $g_{-n}, \ldots, g_{m-1}$ from $\alpha$, $\alpha^{-1}$
  and the coefficients $f_j$ using $O(n + m)$ multiplications in $\ZZ_N$.
  For each $i \in \{0, \ldots, m-1\}$,
  the sum $\sum_{j=0}^n f'_j g_{i-j}$ in \eqref{eq:eval-f}
  is equal to the coefficient of $x^i$ in
  the product of the Laurent polynomials
    \[ f' \defeq \sum_{j=0}^n f'_j x^j \qquad \text{and} \qquad
       g  \defeq \sum_{k=-n}^{m-1} g_k x^k.  \]
  The problem thus reduces to multiplying a polynomial of degree $n$
  by a polynomial of degree $n + m - 1$,
  plus $O(n + m)$ auxiliary multiplications in $\ZZ_N$.
  The claimed complexity bound then follows from
  Lemma \ref{lem:polynomial-bound}.
\end{proof}

\subsection{Strassen's method}

The following result,
which goes back to Strassen \cite{Str-factoring},
may be used to quickly rule out ``small'' factors of a given integer $N$.
\begin{prop}
\label{prop:small-factors}
  Given as input a positive integer $M = O(N)$,
  we may test if $N$ has a prime divisor $p \leq M$,
  and if so find the smallest such divisor,
  in time
    \[ O(M^{1/2} \lg^3 N). \]
\end{prop}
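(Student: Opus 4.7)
The plan is to implement Strassen's classical block-search strategy. Set $L \defeq \lceil M^{1/2} \rceil$, so that $L \leq M^{1/2} + 1$ and $L^2 \geq M$, and partition $[1, M]$ into $\lceil M/L \rceil = O(L)$ consecutive blocks $B_k \defeq \{kL+1, \ldots, \min(kL+L, M)\}$. The key observation is that $N$ has a prime divisor in $B_k$ precisely when the product
\[
  P_k \defeq \prod_{i=kL+1}^{kL+L} i \bmod N
\]
is not coprime to $N$, so it suffices to compute all of the $P_k$ quickly and then zoom in on the first block where a nontrivial GCD appears.

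First I would use Lemma \ref{lem:product-tree} to build $f(x) \defeq (x+1)(x+2)\cdots(x+L) \in \ZZ_N[x]$ in time $O(L \lg^3 N) = O(M^{1/2} \lg^3 N)$. Since $P_k = f(kL) \bmod N$, the problem reduces to evaluating a single polynomial of degree $L$ at the $O(L)$ points $0, L, 2L, \ldots$. Using the standard fast multipoint evaluation algorithm based on a subproduct tree (see e.g.\ \cite[Ch.~10]{vzGG-compalg3}), together with the bound $\MM_N(L) = O(L \lg^2 N)$ from Lemma \ref{lem:polynomial-bound}, this step runs in $O(\MM_N(L) \lg L) = O(L \lg^3 N)$.

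Next I would scan $k$ in increasing order and compute $g_k \defeq \gcd(P_k, N)$. Each GCD costs $O(\MM(\lg N) \lg \lg N) = O(\lg^2 N)$, for a cumulative cost of $O(M^{1/2} \lg^2 N)$. If every $g_k$ equals $1$, we conclude that $N$ has no prime divisor $\leq M$. Otherwise, let $k^*$ be the smallest index with $g_{k^*} > 1$; by minimality of $k^*$, no integer in the earlier blocks shares a common factor with $N$, so the smallest prime divisor of $N$ that is $\leq M$ must lie in $B_{k^*}$. I would then trial-divide $N$ by each element of $B_{k^*}$ in increasing order and return the first integer that divides $N$; this integer is automatically prime, since any composite divisor would have a strictly smaller prime divisor that, by the same argument, would itself lie in $B_{k^*}$ and be encountered first. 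This finishing step costs at most $O(L \MM(\lg N)) = O(M^{1/2} \lg^2 N)$.

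The only delicate part of the argument is the multipoint evaluation step: Lemma \ref{lem:bluestein} does not apply directly here, because the evaluation points $0, L, 2L, \ldots$ form an arithmetic rather than a geometric progression. I would therefore rely on the classical subproduct-tree multipoint evaluation, whose complexity $O(\MM_N(L) \lg L)$ just suffices, via Lemma \ref{lem:polynomial-bound}, to match the target bound $O(M^{1/2} \lg^3 N)$.
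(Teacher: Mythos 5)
Your proposal is correct and follows essentially the same route as the paper's (sketched) proof: build $f(x)=(x+1)\cdots(x+\lceil M^{1/2}\rceil)$ with the product tree of Lemma \ref{lem:product-tree}, evaluate it at the arithmetic progression $0, L, 2L, \ldots$ by classical subproduct-tree multipoint evaluation, and then locate the actual divisor inside the first block whose value has nontrivial GCD with $N$. Your write-up merely fills in details the paper delegates to the references (the explicit zoom-in by trial division and the minimality argument), including the correct observation that Lemma \ref{lem:bluestein} does not apply to arithmetic-progression points.
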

\begin{proof}
  We give only a sketch; for details (and improvements) see
  \cite{Str-factoring}, \cite{BGS-recurrences} or \cite{CH-factor}.
  Let $d \defeq \lceil M^{1/2} \rceil$
  and compute $f(x) = (x+1) \cdots (x+d) \in \ZZ_N[x]$
  using Lemma \ref{lem:product-tree}.
  Then evaluate $f(x)$ simultaneously at $x = 0, d, \ldots, (d-1)d$,
  using a fast multipoint evaluation algorithm.
  If any $f(jd) = (jd+1)\cdots(jd+d)$ has non-trivial GCD with $N$,
  then one of its constituents $jd + i$ must have non-trivial GCD with $N$,
  and this observation may be used to locate any prime divisors of $N$
  such that $p \leq d^2$
  (using a similar procedure to the closely related
  Algorithm \ref{algo:collisions} of the present paper).
\end{proof}
\begin{rem}
  Taking $M \defeq \lfloor N^{1/2} \rfloor$ in the above result leads
  directly to Strassen's $N^{1/4+o(1)}$ integer factorisation algorithm.
\end{rem}

\subsection{Finding elements of large order}

The following result,
which allows us to find elements of large order in $\ZZ_N^*$,
is due to Hittmeir.
Of course in practice this is easy, because almost every element of $\ZZ_N^*$
has large order.
The point of Hittmeir's result is that the algorithm is deterministic
and gives provably correct output.
\begin{prop}[\protect{\cite[Theorem~6.3]{Hit-BSGS}}]
\label{prop:find-alpha}
  There is an algorithm with the following properties.
  It takes as input integers $N \geq 2$ and $D$ such that
  $N^{2/5} \leq D \leq N$.
  It returns either some $\alpha \in \ZZ_N^*$
  such that $\ord_N(\alpha) > D$,
  or a nontrivial factor of $N$, or ``$N$ is prime''.
  Its running time is
    \[ O(D^{1/2} \lg^2 N). \]
\end{prop}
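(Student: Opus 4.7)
The strategy would be to combine Strassen-style trial division for small prime divisors of $N$ with a baby-step/giant-step search for the multiplicative order of a few carefully chosen small candidates in $\ZZ_N^*$. First, I would apply Proposition \ref{prop:small-factors} with a bound $M$ tuned to the budget (taking $M \approx D / \lg^2 N$ ensures the cost $O(M^{1/2} \lg^3 N)$ lies within $O(D^{1/2} \lg^2 N)$). If a prime factor $p \leq M$ is found we return it; otherwise $N$ has no prime factor up to $M$, and in particular $\gcd(\alpha, N) = 1$ for every small candidate $\alpha$ considered below.

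Second, for each $\alpha$ in a prescribed short list of positive integers (e.g.\ the first few primes $2, 3, 5, \ldots$), I would test whether $\ord_N(\alpha) > D$ by a BSGS search. Setting $s \defeq \lceil D^{1/2} \rceil$, one computes the baby steps $\alpha^{-j} \bmod N$ for $0 \le j < s$, sorts them, and then for each giant step $\alpha^{is} \bmod N$ with $1 \le i \le s$ performs a binary-search lookup; a match at indices $i,j$ means $\alpha^{is+j} \equiv 1 \pmod N$, i.e.\ $\ord_N(\alpha) \leq D$. Using the merge-sort bound from the preliminaries, this costs $O(s \lg N \lg s) = O(D^{1/2} \lg^2 N)$ per $\alpha$. (Alternatively one could package the baby steps into a product polynomial via Lemma \ref{lem:product-tree} and evaluate at the geometric progression of giant steps via Lemma \ref{lem:bluestein}, at the price of an extra $\lg N$ factor.) If some $\alpha$ yields no collision we return it; if a collision yields an exponent $k \leq D$ with $\alpha^k \equiv 1 \pmod N$, we try to convert this into a factor by Miller--Rabin--style GCD extraction: factor $k$ by trial division of cost $O(k^{1/2}) = O(D^{1/2})$, then compute $\gcd(\alpha^{k/\ell} - 1, N)$ for each prime divisor $\ell$ of $k$, and additionally walk down the $2$-adic chain $\alpha^{k}, \alpha^{k/2}, \alpha^{k/4}, \ldots$ to look for a nontrivial square root of $1$ modulo $N$.

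The main obstacle is justifying the correctness of the ``$N$ is prime'' output if every $\alpha$ in the prescribed list has order $\leq D$ and no extraction yields a factor. This is where the hypothesis $D \geq N^{2/5}$ must enter: after eliminating prime factors below $M$ in Step 1, any composite $N$ has at most a bounded number of prime factors, each larger than $M$, and the CRT structure of $\ZZ_N^*$ combined with the size of the BSGS window forces strong constraints on how the orders of $\alpha$ modulo the various prime factors can line up. I would expect one can argue that if a short deterministic list of $\alpha$'s all fail to produce a factor via the $2$-adic and prime-divisor GCD tests, then $\lambda(N)$ itself, and hence $N$, must be prime. Making this combinatorial/number-theoretic step precise, and identifying the exact short list of $\alpha$'s that suffices, is the crux of the proof and the part where I would lean on the detailed analysis in \cite{Hit-BSGS}; the arithmetic accounting for the $O(D^{1/2} \lg^2 N)$ bound is then a routine check summing Steps 1--3.
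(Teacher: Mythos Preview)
The paper does not prove this proposition; it quotes it from \cite{Hit-BSGS} and gives only a short summary of Hittmeir's method. That summary, however, already diverges from your plan at the decisive step, and your plan has a genuine gap there.

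Your BSGS phase matches Hittmeir's: one tries $\alpha = 2, 3, \ldots$ and uses baby-step/giant-step to either certify $\ord_N(\alpha) > D$ or compute the exact order. The difficulty is what to do when a small order $k$ is found. You propose Miller--Rabin--style extraction: factor $k$, compute $\gcd(\alpha^{k/\ell}-1,N)$ for primes $\ell \mid k$, and walk the $2$-adic chain looking for a nontrivial square root of $1$. But there is no reason this must succeed for any fixed finite list of bases. For instance, if $N = pq$ with $\ord_p(\alpha) = \ord_q(\alpha) = k$, then $\alpha^{k/2} \equiv -1 \pmod N$ (when $k$ is even) and $\gcd(\alpha^{k/\ell}-1,N)=1$ for every prime $\ell \mid k$, so no factor appears; and nothing prevents this from happening simultaneously for every $\alpha$ in a prescribed short list. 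Your final paragraph effectively concedes that you do not have an argument here, but the mechanism you sketch is not merely incomplete---it is the wrong one.

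Hittmeir's actual mechanism, as summarised after the proposition, is different in kind. Knowing $\ord_N(\alpha) = k$ forces every prime $p \mid N$ to satisfy $p \equiv 1 \pmod{\ord_p(\alpha)}$ with $\ord_p(\alpha) \mid k$, so the prime divisors of $N$ are confined to a small union of arithmetic progressions. Accumulating such constraints from successive candidates $\alpha$ eventually narrows the search space enough that a \emph{variant of Strassen's method} (Proposition~\ref{prop:small-factors}), restricted to those progressions, locates the factors directly within the time budget; the hypothesis $D \geq N^{2/5}$ is what balances the number of admissible residue classes against the length of this restricted search. Thus the Strassen-type step is the \emph{final} phase, exploiting the accumulated order information, rather than a preliminary sieve as in your plan, and it is this ``small order constrains residue class, then search'' idea---not Miller--Rabin extraction---that carries the proof.
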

The algorithm is somewhat involved, and we give only a brief summary here.
The idea is to try to compute the orders in $\ZZ_N$ of candidates
$\alpha = 2, 3$, and so on, using a BSGS search up to some bound.
If for some candidate $\alpha$ we fail to compute the order,
then the order must be large and we are done.
On the other hand, if we succeed in computing the order of a candidate,
then this reveals valuable information about the possible prime divisors of $N$.
Continuing in this way, Hittmeir shows that eventually we recover enough
information about these divisors that it becomes feasible
to search for them directly using a variant of the algorithm of
Proposition \ref{prop:small-factors}.
For details, see \cite{Hit-BSGS}.

\begin{rem}
  According to \cite[Remark~6.4]{Hit-BSGS},
  the assumption ${D \geq N^{2/5}}$ is not the sharpest possible;
  the same result may be proved under the weaker hypothesis
  $D = \Omega(N^{1/3+o(1)})$ for suitable $o(1)$.
  Luckily for us,
  the $N^{2/5}$ bound as stated is good enough for our application
  (but only just).
\end{rem}

\section{Lehman's strategy}

Given an integer $N$ that is either prime or a semiprime $pq$,
the main algorithm in Section \ref{sec:main} will attempt to factor $N$
by searching for integers of the form $aq + bp$ for various small values
of $a$ and $b$.
Before considering Lehman's theorem in detail,
we show (following \cite[Lem.~4.1]{Hit-timespace}) that it is easy
to recover $p$ and $q$ once we have guessed the correct value of $aq + bp$.
\begin{lem}
\label{lem:recover}
  Given as input an integer $N \geq 2$ such that
  $N$ is either a prime or a semiprime $pq$,
  and positive integers $a, b, u = O(N)$,
  we may test if $u$ is of the form $aq + bp$,
  and if so recover $p$ and $q$,
  in time $O(\MM(\lg N))$.
\end{lem}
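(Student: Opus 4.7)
The plan is to exploit the quadratic satisfied by $p$. If $u = aq + bp$ and $N = pq$ with $a, b$ positive integers, then
\[ bp^2 - up + aN = bp^2 - (aq + bp)p + aN = a(N - pq) = 0, \]
so $p$ is a root of the quadratic $bX^2 - uX + aN \in \ZZ[X]$. Its discriminant $\Delta \defeq u^2 - 4abN$ must therefore be a non-negative perfect square, say $\Delta = s^2$ with $s \geq 0$, and $p = (u + \varepsilon s)/(2b)$ for some sign $\varepsilon \in \{+1, -1\}$. Conversely, once we have a candidate $p$ with $1 < p < N$ and $p \mid N$, the partner $q$ is forced to equal $N/p$, and we can confirm the guess by the identity $u = a(N/p) + bp$.

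This suggests the following procedure. First compute $\Delta = u^2 - 4abN$; if $\Delta < 0$, return ``no''. Otherwise compute $s \defeq \lfloor \Delta^{1/2} \rfloor$ using the fixed-rational-power primitive from Section~2 (with exponent $1/2$), and check $s^2 = \Delta$; if not, return ``no''. For each $\varepsilon \in \{+1, -1\}$, test whether $2b$ divides $u + \varepsilon s$, and if so set $p \defeq (u + \varepsilon s)/(2b)$. For each integer candidate $p$ so produced, verify that $1 < p < N$, that $p \mid N$, and that $u = a(N/p) + bp$; if every check passes, return $(p, N/p)$. If no candidate survives, return ``no''. The quadratic derivation guarantees that the correct $p$ is tried when $u$ really does have the claimed form, while the explicit verification of $u = a(N/p) + bp$ rules out false positives. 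In the case that $N$ is prime, no candidate $p$ can lie strictly between $1$ and $N$ and divide $N$, so the routine correctly answers ``no''.

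For the complexity, every intermediate quantity has $O(\lg N)$ bits, since $a, b, u, N = O(N)$. Forming $\Delta$ uses $O(1)$ multiplications and additions on such integers, costing $O(\MM(\lg N))$; the integer square root also costs $O(\MM(\lg N))$ by the results of Section~2. The divisibility tests by $2b$, the quotient $N/p$, and the final verification each reduce to a constant number of multiplications, additions, and remainder computations on $O(\lg N)$-bit integers, again costing $O(\MM(\lg N))$ apiece. Summing over the constant number of steps yields the claimed bound. There is really no obstacle of substance here: the mathematical content is the quadratic formula, and the only bookkeeping is to make sure every ``yes'' answer carries the explicit certificate $u = a(N/p) + bp$.
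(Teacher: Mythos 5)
Your proposal is correct and follows essentially the same route as the paper: test whether the discriminant $u^2 - 4abN$ is a perfect square and apply the quadratic formula, then verify and read off the factors, with all arithmetic on $O(\lg N)$-bit integers costing $O(\MM(\lg N))$. The only cosmetic difference is that you work with the quadratic $bX^2 - uX + aN$ having root $p$, whereas the paper uses the monic quadratic $y^2 - uy + abN$ with roots $aq$ and $bp$; the discriminant and the analysis are identical.
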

\begin{proof}
  If $N = pq$ and $u = aq + bp$, then $aq$ and $bp$ are roots of the
  polynomial $Q(y) \defeq y^2 - uy + abN$.
  We first check if $Q(y)$ has rational roots,
  by testing whether $u^2 - 4abN$ is a square,
  and if so, compute the roots via the quadratic formula.
  It is then straightforward to recover $p$ and $q$, if they exist.
\end{proof}
\begin{rem}
\label{rem:lehman}
  In fact, as is implicit in Lehman's paper,
  we may still recover $p$ and $q$ without knowing $a$ and $b$,
  provided that we know the product $ab$ and that $\gcd(ab, N) = 1$.
  This requires one additional GCD operation,
  so the cost rises to $O(\MM(\lg N) \lg \lg N)$.
\end{rem}

Now we state a result that is almost equivalent to Lehman's main theorem
in \cite{Leh-factoring}.
Actually, Lehman's original statement would be perfectly adequate for our
purposes,
but we have chosen to present a slightly different version,
together with a considerably simpler proof
(inspired by the presentation of Lehman's algorithm
in \cite[Thm.~5.1.3]{CP-primes}).
\begin{lem}
\label{lem:lehman}
  Let $p, q$ and $r$ be positive integers, and let $N = pq$.
  Assume that
  \begin{equation}
  \label{eq:p-condition}
    (N/r)^{1/2} \leq p < N^{1/2}.
  \end{equation}
  Then there exist positive integers $a$ and $b$ with $1 \leq ab \leq r$ such that
  \begin{equation}
  \label{eq:lehman-inequality}
    0 \leq aq + bp - (4abN)^{1/2} < \frac{N^{1/2}}{4 r (ab)^{1/2}}.
  \end{equation}
\end{lem}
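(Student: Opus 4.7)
The plan is to split the proof into two steps: an algebraic reduction, followed by a Diophantine approximation argument based on continued fractions.

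For the reduction, I would start from the identity
\[
  aq + bp - 2\sqrt{abN} \;=\; (\sqrt{aq} - \sqrt{bp})^2 \;=\; \frac{(aq - bp)^2}{(\sqrt{aq} + \sqrt{bp})^2},
\]
which uses $aq \cdot bp = abN$. Nonnegativity of the middle expression gives the left inequality in \eqref{eq:lehman-inequality}. Bounding the denominator from below by $(\sqrt{aq} + \sqrt{bp})^2 \geq 4\sqrt{abN}$ via AM--GM, the quantity is at most $(aq - bp)^2/(4\sqrt{abN})$, and a routine calculation shows that this is strictly less than $N^{1/2}/(4r(ab)^{1/2})$ whenever $ab \leq r$ and $|aq - bp| < (N/r)^{1/2}$. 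Hence the lemma reduces to producing positive integers $a, b$ with $ab \leq r$ and $|aq - bp| < (N/r)^{1/2}$.

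For this Diophantine step, I would use the continued fraction expansion of $\beta \defeq p/q$, which lies in $(0,1)$ since $p < N^{1/2} < q$. Let $P_n/Q_n$ denote the convergents. I take $n$ to be the largest index with $Q_n \leq (rq/p)^{1/2}$ and set $(a, b) \defeq (P_n, Q_n)$. The standard convergent bound $|Q_n \beta - P_n| < 1/Q_{n+1}$ (understood as $0$ at the terminating convergent, where $P_n/Q_n = \beta$ exactly) combined with the maximality of $n$ gives
\[
  |aq - bp| \;=\; q\,|Q_n \beta - P_n| \;<\; q/Q_{n+1} \;<\; (N/r)^{1/2},
\]
while $P_n \leq Q_n \beta + 1/Q_{n+1}$ gives $ab = P_n Q_n < Q_n^2 \beta + 1 \leq r + 1$, so $ab \leq r$ because $ab$ is a positive integer.

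The main obstacle is ensuring $P_n \geq 1$, i.e.\ that the chosen $n$ is at least $1$; this is needed because $P_0 = 0$ when $\beta < 1$. This is precisely where the hypothesis $p \geq (N/r)^{1/2}$ enters: it gives $q/p \leq r$, and hence $Q_1 = \lfloor q/p \rfloor \leq q/p \leq (rq/p)^{1/2}$ (using $q/p \leq r$ once more), so the maximality of $n$ forces $n \geq 1$ and the argument delivers the required pair $(a, b)$.
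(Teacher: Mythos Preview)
Your proof is correct and follows essentially the same route as the paper's: both reduce the inequality via the identity $aq+bp-2\sqrt{abN}=(aq-bp)^2/(\sqrt{aq}+\sqrt{bp})^2$ and AM--GM to the problem of finding positive $a,b$ with $ab\leq r$ and $|aq-bp|<(N/r)^{1/2}$, and both use the hypothesis $p\geq(N/r)^{1/2}$ precisely to guarantee $a\geq1$. The only cosmetic difference is that the paper invokes a Dirichlet-type approximation theorem (Theorem~36 of Hardy--Wright) to produce the pair $(a,b)$, whereas you construct it explicitly as a continued-fraction convergent of $p/q$; these are two standard packagings of the same approximation, and the resulting bounds are identical.
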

\begin{proof}
  Theorem 36 of \cite{HW-intro-NT} states that for any real number
  $\xi \in (0,1)$ and any integer $n \geq 1$,
  there exist integers $a$ and $b$,
  with $1 \leq b \leq n$ and $0 \leq a \leq b$, such that
    \[ \left| \frac{a}{b} - \xi \right| \leq \frac{1}{(n+1)b}. \]
  We apply this result for
    \[ \xi \defeq p/q, \qquad n \defeq \lfloor B \rfloor, \qquad B \defeq (qr/p)^{1/2} > 1. \]
  Since $n \leq B < n+1$, we obtain
    \[ \left| \frac{a}{b} - \frac{p}{q} \right| < \frac{1}{Bb}. \]
  The hypothesis $p \geq (N/r)^{1/2} = (pq/r)^{1/2}$ yields
  $p/q \geq (p/qr)^{1/2} = 1/B$. Therefore
    \[ \frac{a}{b} > \frac{p}{q} - \frac{1}{Bb} \geq \frac{1}{B} - \frac{1}{B} = 0, \]
  showing that $a$ must be positive.
  Next, the estimate
    \[ ab = \frac{a}{b} b^2 < \left(\frac{p}{q} + \frac{1}{Bb}\right) b^2
          = \frac{p}{q} b^2 + \frac{b}{B} \leq \frac{p}{q} B^2 + 1 = r + 1 \]
  implies that $ab \leq r$.
  Finally, the inequality of arithmetic and geometric means yields
  $aq + bp \geq (4abN)^{1/2}$, so
    \[ 0 \leq aq + bp - (4abN)^{1/2} = \frac{(aq + bp)^2 - 4abN}{aq + bp + (4abN)^{1/2}}
      \leq \frac{(aq - bp)^2}{4(abN)^{1/2}}. \]
  Since $|aq - bp| < q/B = (N/r)^{1/2}$, we obtain
    \[ 0 \leq aq + bp - (4abN)^{1/2} < \frac{N/r}{4(abN)^{1/2}} = \frac{N^{1/2}}{4r(ab)^{1/2}}. \qedhere \]
\end{proof}

\begin{rem}
\label{rem:discussion}
  Lemma \ref{lem:lehman} shows that the total number of
  candidates for $aq + bp$,
  i.e., the number of integers $x$ such that
    \[ 0 \leq x - (4kN)^{1/2} < \frac{N^{1/2}}{4rk^{1/2}} \]
  for some $k$ in the range $1 \leq k \leq r$, is at most
    \[ \sum_{k=1}^r \left\lceil \frac{N^{1/2}}{4rk^{1/2}} \right\rceil
       \leq \sum_{k=1}^r \left(\frac{N^{1/2}}{4rk^{1/2}} + 1\right)
      = O\left( \frac{N^{1/2}}{r^{1/2}} + r \right). \]
  Lehman optimises this last expression by taking
  $r \asymp N^{1/3} = N^{0.333\ldots}$,
  so that the number of candidates is $O(N^{1/3})$.
  He checks each one individually,
  using the procedure mentioned in Remark \ref{rem:lehman},
  leading to his overall $N^{1/3+o(1)}$ bound.

  Hittmeir \cite{Hit-timespace} takes a smaller value of $r$,
  namely $r = N^{2/9+o(1)} = N^{0.222\ldots+o(1)}$,
  so that the number of candidates increases to
  $N^{7/18+o(1)} = N^{0.388\ldots+o(1)}$.
  (Note that in \cite{Hit-timespace}, the parameter $r$ is called $\eta$.)
  Taking a smaller value of $r$ introduces more structure
  into the space of candidates;
  Hittmeir takes advantage of this by using BSGS techniques
  to speed up the search instead of examining each candidate individually.
  First, for each $k$ up to $N^{1/9+o(1)} = N^{0.111\ldots+o(1)}$
  (Hittmeir calls this threshold $\xi$),
  he runs a BSGS search to examine all candidates corresponding
  to that one value of $k$.
  Then he performs one more BSGS search to examine all
  remaining values of $k$ up to
  $r = N^{2/9+o(1)} = N^{0.222\ldots+o(1)}$.
  Altogether, this leads to an overall complexity of $N^{2/9+o(1)}$.

  Our strategy (see Section \ref{sec:main})
  is to take $r$ smaller again,
  namely $r = N^{1/5+o(1)} = N^{0.2+o(1)}$.
  The number of candidates increases even further to
  $N^{2/5+o(1)} = N^{0.4+o(1)}$.
  However, we perform a single BSGS search
  (similar to the method Hittmeir uses for the large values of $k$)
  that covers all values of $k$ simultaneously.
  In effect, we enjoy a ``square-root speedup'' over the entire search space.

  An interesting question is whether it is possible to obtain a fully
  square-root speedup for Lehman's original choice $r \asymp N^{1/3}$.
  This would presumably lead to a factoring algorithm with complexity
  $N^{1/6+o(1)}$.
\end{rem}

\section{The main algorithm}
\label{sec:main}

Before discussing the main search algorithm,
we first consider Algorithm \ref{algo:collisions},
which finds collisions modulo $p$ or $q$ between two subsets of $\ZZ_N$.
This subroutine is almost identical to
Algorithm 3.2 of \cite{Hit-timespace}.
However, whereas Hittmeir allows both subsets to be essentially arbitrary,
we specialise to the case that one of the subsets consists entirely
of powers of some fixed $\alpha \in \ZZ^*_N$.
This allows us to improve the complexity from $O((n + m) \lg^3 N)$
(see \cite[Lem.~3.3]{Hit-timespace}) to $O(n \lg^3 N + m \lg^2 N)$.
\begin{myalgorithm}[ht]
\caption{-- finding collisions modulo $p$ or $q$}
\label{algo:collisions}
  \begin{flushleft}
  \textbf{Input:}
  \begin{itemize}[label=--,labelwidth=0.7cm,leftmargin=\labelwidth]
    \item A positive integer $N$, either prime or semiprime.
    \item A positive integer $m$,
      and an element $\alpha \in \ZZ_N^*$ with $\ord_N(\alpha) \geq m$.
    \item Elements $v_1, \ldots, v_n \in \ZZ_N$
      for some positive integer $n$,
      such that $v_h \neq \alpha^i$
      for all $h \in \{1, \ldots, n\}$ and $i \in \{0, \ldots, m-1\}$.
  \end{itemize}
  \textbf{Output:}
  \begin{itemize}[label=--,labelwidth=0.7cm,leftmargin=\labelwidth]
    \item If $N = pq$ is a semiprime,
    and if there exists $h \in \{1, \ldots, n\}$ such that
    $v_h \equiv \alpha^i \pmod p$ or $v_h \equiv \alpha^i \pmod q$
    for some $i \in \{0, \ldots, m-1\}$, returns $p$ and $q$.
    \item Otherwise returns ``no factors found''.
  \end{itemize}
  \end{flushleft}
  \vspace{2pt} \hrule height 0.2pt \vspace{1pt}
  \begin{enumerate}[label=\arabic*),labelwidth=0.7cm,leftmargin=\labelwidth,itemsep=0.2em,topsep=\itemsep]
    \item Using Lemma \ref{lem:product-tree} (product tree),
      compute the polynomial
      \[ f(x) \defeq (x - v_1) \cdots (x - v_n) \in \ZZ_N[x]. \]
    \item Using Lemma \ref{lem:bluestein} (Bluestein's algorithm),
      compute the values $f(\alpha^i) \in \ZZ_N$ for $i = 0, \ldots, m-1$.
    \item For $i = 0, \ldots, m-1$:
    \begin{enumerate}[label=\alph*),itemsep=0.2em,topsep=\itemsep]
      \item Compute $\gamma_i \defeq \gcd(N, f(\alpha^i))$.
      \item If $\gamma_i \notin \{1, N\}$, recover $p$ and $q$ and return.
      \item If $\gamma_i = N$, then for $h = 1, \ldots, n$:
	\begin{enumerate}[label=\roman*),itemsep=0.2em,topsep=\itemsep]
	  \item If $\gcd(N, v_h - \alpha^i) \neq 1$,
	  recover $p$ and $q$ and return.
	\end{enumerate}
    \end{enumerate}
    \item Return ``no factors found''.
  \end{enumerate}
\end{myalgorithm}

\begin{prop}
\label{prop:collisions}
  Algorithm \ref{algo:collisions} is correct.
  For $m, n = O(N)$, its running time is
    \[ O(n \lg^3 N + m \lg^2 N). \]
\end{prop}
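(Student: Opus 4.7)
My plan is to dispatch correctness first and then bound the running time, isolating the one subtle point about the inner loop.

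For correctness, I would work in $\ZZ_N$ and note that $f(\alpha^i) = \prod_{h=1}^n (\alpha^i - v_h)$, so a prime factor of $N$ divides $f(\alpha^i)$ precisely when some collision occurs at index $i$. Split on the value of $\gamma_i \defeq \gcd(N, f(\alpha^i))$. If $\gamma_i \notin \{1, N\}$ then $\gamma_i$ is itself a nontrivial factor of $N$ and step 3b) recovers both primes. If $\gamma_i = N$, then when $N = pq$ both $p$ and $q$ divide $f(\alpha^i)$, so there exist indices $h_1, h_2$ with $p \mid \alpha^i - v_{h_1}$ and $q \mid \alpha^i - v_{h_2}$; the hypothesis $v_h \neq \alpha^i$ in $\ZZ_N$ forces $h_1 \neq h_2$ (otherwise $N \mid \alpha^i - v_{h_1}$, a contradiction), so at least one of $\gcd(N, v_{h_1} - \alpha^i)$ or $\gcd(N, v_{h_2} - \alpha^i)$ is a nontrivial proper divisor, and step 3c)i) locates it. Conversely, if no collision exists at any $i$, every such GCD equals $1$ and the algorithm correctly reports failure. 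The case $N$ prime is immediate: the hypothesis forces $f(\alpha^i)$ to be a nonzero element of $\ZZ_N$, so every $\gamma_i$ is $1$.

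For the running time, step 1 costs $O(n \lg^3 N)$ by Lemma \ref{lem:product-tree} and step 2 costs $O((n+m) \lg^2 N)$ by Lemma \ref{lem:bluestein}. In step 3, each of the $m$ GCD computations in 3a) costs $O(\MM(\lg N) \lg \lg N) = O(\lg^2 N)$ via the half-GCD algorithm recalled in the preliminaries, contributing $O(m \lg^2 N)$ in total.

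The one point requiring thought, and really the main obstacle, is controlling the cost of the inner loop 3c)i). A naive bound would give $O(nm \lg^2 N)$, which is far too large. The crucial observation is that whenever step 3c) is entered we have $\gamma_i = N$, and by the correctness analysis above the inner loop is then guaranteed to detect a nontrivial factor and return before advancing to the next $i$. Consequently step 3c)i) is executed for at most one value of $i$ over the entire run, contributing at most $O(n \lg^2 N)$ additional work. Summing all contributions yields the claimed bound $O(n \lg^3 N + m \lg^2 N)$.
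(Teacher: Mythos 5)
Your proposal is correct and follows essentially the same route as the paper: the product-tree and Bluestein lemmas give Steps (1)--(2), and the key observation that $\gamma_i = N$ forces the inner loop to find a nontrivial factor and terminate (so Step 3c)i) runs for at most one value of $i$, at most $n$ GCDs) is exactly the paper's argument for avoiding the naive $O(nm)$ bound. Your correctness discussion is organised slightly differently (case split on $\gamma_i$ with the $h_1 \neq h_2$ remark, plus explicit treatment of the prime case) but is equivalent in substance.
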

\begin{proof}
  We first prove correctness.
  Suppose that $N = pq$ is a semiprime, and that $v_{h_0} \equiv \alpha^{i_0}$
  modulo $p$ or modulo $q$ for some $h_0$ and $i_0$.
  Then $f(\alpha^{i_0})$ includes the factor $v_{h_0} - \alpha^{i_0}$,
  so we have $\gamma_{i_0} \neq 1$ in Step (3a).
  If additionally $\gamma_{i_0} \neq N$,
  then $p$ and $q$ are successfully recovered in Step (3b).
  Suppose however that $\gamma_{i_0} = N$.
  Then the factors will be recovered in Step (3ci) when we reach $h = h_0$,
  as the preconditions ensure that $\gcd(N, v_{h_0} - \alpha^{i_0}) \neq N$.

  Now we analyse the complexity.
  By Lemma \ref{lem:product-tree} and Lemma \ref{lem:bluestein},
  Steps (1)--(2) run in time $O(n \lg^3 N + m \lg^2 N)$.
  Step (3a) executes at most $m$ times.
  Step (3ci) executes at most $n$ times,
  because as soon as we find $\gamma_i = N$ in Step (3c),
  we know that $(v_1 - \alpha^i) \cdots (v_n - \alpha^i) = 0$ in $\ZZ_N$,
  so one of the factors $v_h - \alpha^i$ must fail to be invertible modulo $N$
  and thus reveal the factors $p$ and $q$
  (here again we use the hypothesis that $v_h \neq \alpha^i$ in $\ZZ_N$).
  Therefore the total number of GCDs computed is at most $m + n$,
  contributing $O((m + n) \lg N \lg^2 \lg N)$ to the complexity.
\end{proof}

We now proceed to the main search algorithm (Algorithm \ref{algo:search}).
As mentioned previously, this algorithm leverages the search strategy that
Hittmeir uses in \cite{Hit-timespace} for the ``large'' values of $k$,
but we carry it out for the entire range of $k$ simultaneously.

\begin{myalgorithm}[ht]
\caption{-- the main search}
\label{algo:search}
\begin{flushleft}
\textbf{Input:}
\begin{itemize}[label=--,labelwidth=0.7cm,leftmargin=\labelwidth]
  \item Positive integers $N \geq 2$ and $r$,
  such that either $N$ is prime, or $N = pq$ is a semiprime
  satisfying \eqref{eq:p-condition},
  i.e., such that $(N/r)^{1/2} \leq p < N^{1/2}$.
  \item A positive integer $m$,
  and an element $\alpha \in \ZZ_N^*$ with $\ord_N(\alpha) \geq m$.
\end{itemize}
\textbf{Output:} If $N$ is a semiprime, returns $p$ and $q$. Otherwise returns ``$N$ is prime''.
\end{flushleft}
\vspace{2pt} \hrule height 0.2pt \vspace{1pt}
\begin{enumerate}[label=\arabic*),labelwidth=0.7cm,leftmargin=\labelwidth,itemsep=0.2em]
  \item
  Compute $\alpha^i \in \ZZ_N$ for $i = 0, \ldots, m-1$. 
  For each $i = 1, \ldots, m-1$, check whether $\gcd(N, \alpha^i - 1) = 1$.
  If not, return $p$ and $q$.
  
  \item
  For all pairs of positive integers $(a,b)$ such that $1 \leq ab \leq r$:
  \begin{enumerate}[label=\alph*),itemsep=0.2em]
    \item Compute
    \begin{equation}
    \label{eq:t-defn}
      t_{a,b} \defeq \alpha^{aN + b - \lceil (4abN)^{1/2} \rceil} \in \ZZ_N.
    \end{equation}

    \item
    For each integer $j$ in the interval
      \[ 0 \leq j < \frac{N^{1/2}}{4rm (ab)^{1/2}}, \]
    compute
    \begin{equation}
    \label{eq:v-defn}
      v_{a,b,j} \defeq \alpha^{-jm} t_{a,b} \in \ZZ_N.
    \end{equation}
  \end{enumerate}

  \item
  Applying a sort-and-match algorithm to the output of Steps (1) and (2),
  find all $i \in \{0, \ldots, m-1\}$ and all triples $(a,b,j)$
  (as in Step (2)) such that
  \begin{equation}
    \label{eq:congruence-N}
    v_{a,b,j} = \alpha^i.
  \end{equation}
  For each such match, apply Lemma~\ref{lem:recover} to the candidate
    \[ u \defeq i + jm + \lceil(4abN)^{1/2}\rceil. \]
  If Lemma \ref{lem:recover} succeeds in finding $p$ and $q$, return.
  
  \item
  Let $v_1, \ldots, v_n$ be the list of values $v_{a,b,j}$
  computed in Step (2), skipping those that were discovered in Step (3)
  to be equal to $\alpha^i$ for some $i \in \{0, \ldots, m-1\}$.
  Apply Algorithm~\ref{algo:collisions} to $v_1, \ldots, v_n$ with
  the given $N$, $\alpha$ and $m$.
  If Algorithm~\ref{algo:collisions} succeeds in finding $p$ and $q$, return.
  
  \item
  Return ``$N$ is prime''.
\end{enumerate}
\end{myalgorithm}

\begin{prop}
\label{prop:search}
  Algorithm \ref{algo:search} is correct.
  For $r, m = O(N)$, its running time is
    \[ O\left(\left(\frac{N^{1/2}}{r^{1/2} m} + r\right) \lg^4 N + m \lg^2 N \right). \]
\end{prop}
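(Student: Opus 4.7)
The plan is to establish correctness and then bound the running time of each numbered step.

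For correctness, assume $N = pq$ (if $N$ is prime, no returns occur before Step (5), which is the correct output). If Step (1) does not already split $N$, then $\gcd(N,\alpha^i-1)=1$ for $1 \leq i < m$, so $\alpha^i \not\equiv 1 \pmod p$ and $\alpha^i \not\equiv 1 \pmod q$, forcing $\alpha^0,\ldots,\alpha^{m-1}$ to be pairwise distinct modulo $p$, modulo $q$, and hence modulo $N$. Apply Lemma \ref{lem:lehman} to obtain positive $a,b$ with $1 \leq ab \leq r$ and $u \defeq aq+bp$ satisfying $0 \leq u-\lceil(4abN)^{1/2}\rceil < N^{1/2}/(4r(ab)^{1/2})$. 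Writing this difference as $jm+i$ with $0 \leq i < m$ makes $(a,b,j)$ a triple enumerated in Step (2b). By Fermat's little theorem, $\alpha^{u} = \alpha^{aq+bp} \equiv \alpha^{aq+b} \equiv \alpha^{apq+b} = \alpha^{aN+b} \pmod p$, so $v_{a,b,j} \equiv \alpha^i \pmod p$. Two cases: either $v_{a,b,j} = \alpha^i$ holds in $\ZZ_N$, in which case Step (3) detects the match and Lemma \ref{lem:recover} applied to $u = aq+bp$ recovers $p,q$; or the collision is proper (only modulo $p$ or $q$), in which case the value persists into Step (4), where Algorithm \ref{algo:collisions} locates the factors. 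The preconditions of Algorithm \ref{algo:collisions} are met because values equal to some $\alpha^i$ are explicitly discarded, and because $\ord_N(\alpha) \geq m$ is a given input. Spurious matches in Step (3) do no harm, since Lemma \ref{lem:recover} rejects triples that do not correspond to a genuine factorisation.

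For complexity, Step (1) costs $O(m \lg^2 N)$ for the multiplications and GCDs. In Step (2a), the number of pairs $(a,b)$ with $ab \leq r$ equals $\sum_{k=1}^r d(k) = O(r \lg N)$; each exponentiation with exponent $O(\lg N)$ bits costs $O(\lg^3 N)$, contributing $O(r \lg^4 N)$. In Step (2b), precomputing $\alpha^{-m}$ once reduces each $v_{a,b,j}$ to one modular multiplication of cost $O(\lg^2 N)$. The total number of values produced is $n = \sum_{k=1}^r d(k)\,\lceil N^{1/2}/(4rm k^{1/2})\rceil$, which I would bound using the standard estimate $\sum_{k \leq r} d(k)/k^{1/2} = O(r^{1/2}\lg r)$ to obtain $n = O\bigl(N^{1/2}\lg N/(r^{1/2}m) + r\lg N\bigr)$. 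Step (3) sorts the $m+n$ values, costing $O((m+n)\lg^2 N)$, and invokes Lemma \ref{lem:recover} at most $n$ times (each $v_{a,b,j}$ can match at most one $\alpha^i$, by the distinctness established in Step (1)) at $O(\lg^2 N)$ per call. Step (4) runs Algorithm \ref{algo:collisions} in time $O(n \lg^3 N + m \lg^2 N)$ by Proposition \ref{prop:collisions}.

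Summing the dominant contributions, namely $r \lg^4 N$ from Step (2a), $(N^{1/2}/(r^{1/2}m))\lg^4 N$ from the $n \lg^3 N$ term in Step (4), and the $m \lg^2 N$ baseline, yields the stated bound. I expect the main obstacle to be the bookkeeping of the divisor sum defining $n$ and verifying that all lower-order terms (such as the $r \lg N$ piece of $n$) are absorbed into $r \lg^4 N$; a secondary subtlety is ensuring that the precondition $v_h \neq \alpha^i$ of Algorithm \ref{algo:collisions} is indeed established by the pruning in Step (4), which requires the sort of Step (3) to return the matches explicitly.
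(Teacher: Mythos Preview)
Your argument is correct and follows essentially the same route as the paper's proof, including the same estimate for the number $n$ of triples (the paper writes it as a double sum over $(a,b)$ rather than grouping by $k=ab$ and invoking $\sum_{k\leq r} d(k)/k^{1/2}=O(r^{1/2}\lg r)$, but these are literally the same sum). The one spot worth an extra sentence is your Case~2 claim that $v_{a_0,b_0,j_0}$ ``persists into Step (4)'': a priori it could equal some \emph{other} $\alpha^{i'}$ in $\ZZ_N$ and be discarded, but then $\alpha^{i'}\equiv v_{a_0,b_0,j_0}\equiv\alpha^{i_0}\pmod p$ together with the distinctness modulo~$p$ you established after Step~(1) forces $i'=i_0$, contradicting the case hypothesis---exactly the point the paper spells out.
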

\begin{proof}
  First suppose that $N$ is a semiprime $pq$ satisfying \eqref{eq:p-condition}.
  We must prove that in Steps (1)--(4),
  the algorithm succeeds in finding $p$ and $q$.

  If $\ord_p(\alpha) < m$,
  then in Step (1) we will discover $p$ and $q$ when we compute
  $\gcd(N, \alpha^i - 1)$ for $i = \ord_p(\alpha)$,
  because the hypothesis $\ord_N(\alpha) \geq m$ ensures that
  $\alpha^i \neq 1$ for this~$i$.
  Henceforth we may assume that $\ord_p(\alpha) \geq m$.

  Applying Lemma \ref{lem:lehman}, we find that there exist positive integers
  $a_0$ and $b_0$, with $1 \leq a_0 b_0 \leq r$, such that
    \[ 0 \leq y_0 < \frac{N^{1/2}}{4r(a_0 b_0)^{1/2}}, \]
  where
    \[ y_0 = u_0 - \lceil (4 a_0 b_0 N)^{1/2} \rceil, \qquad u_0 = a_0 q + b_0 p. \]
  Observe that
    \[ \alpha^{u_0} = \alpha^{a_0 q + b_0 p} \equiv \alpha^{a_0 pq + b_0} \pmod p, \]
  since $p \equiv 1 \pmod{p-1}$ and $\alpha^{p-1} \equiv 1 \pmod p$.
  Thus
    \[ \alpha^{y_0} \equiv \alpha^{a_0 N + b_0 - \lceil (4 a_0 b_0 N)^{1/2} \rceil} \equiv t_{a_0,b_0} \pmod p, \]
  where $t_{a,b}$ is defined as in \eqref{eq:t-defn}.
  If we now decompose $y_0$ as
    \[ y_0 = i_0 + j_0 m, \qquad 0 \leq i_0 < m, \qquad 0 \leq j_0 < \frac{N^{1/2}}{4rm(a_0 b_0)^{1/2}}, \]
  we find that $\alpha^{i_0 + j_0 m} \equiv t_{a_0,b_0} \pmod p$, or in other words
  \begin{equation}
    \label{eq:congruence-p}
    v_{a_0,b_0,j_0} \equiv \alpha^{i_0} \pmod p,
  \end{equation}
  where $v_{a,b}$ is defined as in \eqref{eq:v-defn}.

  Step (3) attempts to solve \eqref{eq:congruence-p} by finding all solutions
  to the stronger congruence \eqref{eq:congruence-N} (a congruence modulo $N$).
  Let $(i,a,b,j)$ be one of the solutions to
  \eqref{eq:congruence-N} examined in Step (3).
  If it is equal to $(i_0,a_0,b_0,j_0)$,
  then $u = i + jm + \lceil (4abN)^{1/2} \rceil$
  will be equal to
    \[ i_0 + j_0 m + \lceil (4 a_0 b_0 N)^{1/2} \rceil = u_0 = a_0 q + b_0 p, \]
  and the factors $p$ and $q$ will be discovered via Lemma \ref{lem:recover}.
  Now suppose instead that $(i,a,b,j) \neq (i_0,a_0,b_0,j_0)$.
  We claim then that in fact $(a,b,j) \neq (a_0,b_0,j_0)$.
  Indeed, if $(a,b,j) = (a_0,b_0,j_0)$ then
    \[ \alpha^{i_0} \equiv v_{a_0,b_0,j_0} \equiv v_{a,b,j} \equiv \alpha^i \pmod p, \]
  which forces $i_0 = i$ because $0 \leq i, i_0 < m$ and $\ord_p(\alpha) \geq m$.
  Thus $(a,b,j) \neq (a_0,b_0,j_0)$,
  and in particular $v_{a_0,b_0,j_0}$ remains among the candidates
  $v_1, \ldots, v_n$ considered in Step (4).

  Finally we consider Step (4).
  The application of Algorithm \ref{algo:collisions} is valid,
  because in Step (3) we identified and removed all $v_{a,b,j}$ that are
  equal to one of the $\alpha^i$.
  Moreover, the previous paragraph shows that $v_{a_0,b_0,j_0}$ still remains
  among $v_1, \ldots, v_n$.
  Therefore Algorithm \ref{algo:collisions} will succeed in finding $p$ and $q$.

  If $N$ is prime, then Steps (1)--(4) will of course not find any factors,
  and the algorithm correctly returns ``$N$ is prime'' in Step (5).

  We now analyse the complexity.
  Step (1) performs at most $m$ multiplications in $\ZZ_N$ and computes at most
  $m$ GCDs of integers bounded by $N$, so its cost is
    \[ O(m \MM(\lg N) \lg \lg N) = O(m \lg N \lg^2 \lg N). \]

  The number of pairs $(a,b)$ examined in Step (2) is $O(r \lg r)$.
  For each pair, in Step (2a) the exponent $aN  + b - \lceil (4abN)^{1/2} \rceil$
  lies in $O(N^2)$ (as $a, b \leq r = O(N)$),
  so $t_{a,b}$ may be computed in time $O(\MM(\lg N) \lg N)$.
  Therefore the total cost of computing all of the $t_{a,b}$ is
    \[ O((r \lg r) \MM(\lg N) \lg N) = O(r \lg^3 N \lg \lg N). \]
  For Step (2b), we first precompute $\alpha^{-m} = (\alpha^{m-1} \cdot \alpha)^{-1}$
  in time $O(\MM(\lg N) \lg \lg N)$;
  then for each $j$, computing $v_{a,b,j}$ requires a single multiplication
  by $\alpha^{-m}$, so the cost of deducing all of the $v_{a,b,j}$
  from the $t_{a,b}$ is
    \[ O(s \MM(\lg N)) = O(s \lg N \lg \lg N), \]
  where $s$ is the number of triples $(a,b,j)$ examined altogether.
  We may estimate $s$ as follows:
  \begin{multline*}
  s \leq \sum_{1 \leq ab \leq r} \left(\frac{N^{1/2}}{4rm(ab)^{1/2}} + 1\right)
    = \frac{N^{1/2}}{4rm} \sum_{a=1}^r \frac{1}{a^{1/2}} \sum_{b=1}^{\lfloor r/a \rfloor} \frac{1}{b^{1/2}} + \sum_{a=1}^r \sum_{b=1}^{\lfloor r/a \rfloor} 1 \\
    = O\left(\frac{N^{1/2}}{rm} \sum_{a=1}^r \frac{(r/a)^{1/2}}{a^{1/2}} + \sum_{a=1}^r (r/a) \right)
    = O\left(\frac{N^{1/2}}{r^{1/2} m} \lg r + r \lg r \right),
  \end{multline*}
  so
    \[ s = O\left(\left(\frac{N^{1/2}}{r^{1/2} m} + r\right) \lg N\right). \]

  In Step (3) we construct a list of length $m$,
  consisting of the pairs $(\alpha^i, i)$,
  and a second list of length $s$, consisting of the tuples $(v_{a,b,j},a,b,j)$.
  Each pair/tuple occupies $O(\lg N)$ bits, as $i, a, b, j = O(N)$.
  Using merge sort, we sort the lists by their first component;
  this requires time
    \[ O(s \lg N \lg s + m \lg N \lg m) = O((s + m) \lg^2 N). \]
  Note that the powers $\alpha^i$ in the first list are distinct thanks
  to the hypothesis $\ord_N(\alpha) \geq m$,
  so each $v_{a,b,j}$ is equal to at most one $\alpha^i$.
  Linearly traversing the two lists in parallel,
  we may find all matches $v_{a,b,j} = \alpha^i$ in time
    \[ O((s + m) \lg N). \]
  There are at most $s$ such matches,
  so the cost of invoking Lemma \ref{lem:recover} for all of them is at most
    \[ O(s \lg N \lg \lg N). \]
  Note that the preconditions $a, b, u = O(N)$ for Lemma \ref{lem:recover} hold,
  because $a, b \leq r = O(N)$ and
    \[ u = i + jm + \lceil (4abN)^{1/2} \rceil
         = O\left(m + \frac{N^{1/2}}{4r(ab)^{1/2}} + (rN)^{1/2} \right)
         = O(N). \]

  Finally, in Step (4) we certainly have $n \leq s$,
  so according to Proposition \ref{prop:collisions} the cost of Step (4) is
    \[ O(s \lg^3 N + m \lg^2 N). \]

  Combining the contributions from Steps (1)--(4), the overall complexity is
    \[ O(s \lg^3 N + m \lg^2 N + r \lg^3 N \lg \lg N), \]
  and the claimed bound follows immediately.
\end{proof}

Gluing the various pieces together,
we obtain Algorithm \ref{algo:semiprimes}
(analogous to \cite[Algorithm 6.1]{Hit-timespace}),
which factors any sufficiently large $N$ known to be a prime or semiprime.
\begin{myalgorithm}[ht]
\caption{-- factoring (semi)primes}
\label{algo:semiprimes}
  \begin{flushleft}
  \textbf{Input:} An integer $N \geq 10^9$,
  either prime, or a semiprime $pq$ with $p < q$.

  \textbf{Output:} If $N$ is a semiprime, returns $p$ and $q$. Otherwise returns ``$N$ is prime''.
  \end{flushleft}
  \vspace{2pt} \hrule height 0.2pt \vspace{1pt}
  \begin{enumerate}[label=\arabic*),labelwidth=0.7cm,leftmargin=\labelwidth,itemsep=0.2em]
    \item
    Set
      \[ r \defeq \left\lceil \frac{N^{1/5}}{\lg^{4/5} N} \right\rceil \geq 1, \qquad
	 m \defeq \lceil N^{1/5} \lg^{6/5} N \rceil \geq 1. \]
    
    \item 
    Apply Proposition \ref{prop:small-factors} with
    $M \defeq \lceil (N / r)^{1/2} \rceil \geq 1$.
    If any factors are found, return $p$ and $q$.
    
    \item
    Apply Proposition \ref{prop:find-alpha} with
    $D \defeq \lceil N^{2/5} \rceil \geq 1$.
    If any factors of $N$ are found, or $N$ is proved to be prime, return.
    Otherwise, we obtain $\alpha \in \ZZ_N^*$ such that $\ord_N(\alpha) > D$.

    \item
    Apply Algorithm \ref{algo:search} with the given $N$, $\alpha$, $m$ and $r$.
    Return the factors found, or ``$N$ is prime''.
\end{enumerate}
\end{myalgorithm}

\begin{prop}
\label{prop:semiprimes}
Algorithm \ref{algo:semiprimes} is correct, and its running time is
  \[ O(N^{1/5} \lg^{16/5} N). \]
\end{prop}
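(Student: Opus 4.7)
The plan is first to verify correctness by checking that each invocation's preconditions are satisfied, and then to substitute the chosen values of $r, m, M, D$ into the complexity bounds from Propositions \ref{prop:small-factors}, \ref{prop:find-alpha}, and \ref{prop:search}, confirming that all terms balance to $O(N^{1/5} \lg^{16/5} N)$.

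For correctness, I would argue as follows. If $N$ is prime, the subroutines in Steps (2) and (3) will either detect this or return an $\alpha$ of large order, and Algorithm \ref{algo:search} will then correctly report ``$N$ is prime''. If $N = pq$ with $p < q$, either Step (2) finds $p$ directly (because $p \leq M$), or we may assume $p > M \geq (N/r)^{1/2}$; combined with $p < N^{1/2}$ (since $p < q$), this gives exactly condition \eqref{eq:p-condition}. In Step (3), either factors are found or we obtain $\alpha$ with $\ord_N(\alpha) > D = \lceil N^{2/5} \rceil$; since $m = \lceil N^{1/5} \lg^{6/5} N\rceil \leq N^{2/5}$ for $N \geq 10^9$, the precondition $\ord_N(\alpha) \geq m$ for Algorithm \ref{algo:search} is satisfied. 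Proposition \ref{prop:search} then guarantees that Step (4) recovers $p$ and $q$.

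For the complexity, the main obstacle is verifying that the three terms in the Algorithm \ref{algo:search} bound and the Step (2) contribution all collapse to the same order. With $r = \lceil N^{1/5}/\lg^{4/5} N \rceil$ and $M = \lceil (N/r)^{1/2} \rceil$, Proposition \ref{prop:small-factors} gives Step (2) a cost of
\[ O(M^{1/2} \lg^3 N) = O\bigl((N/r)^{1/4} \lg^3 N\bigr) = O\bigl(N^{1/5} \lg^{1/5} N \cdot \lg^3 N\bigr) = O(N^{1/5} \lg^{16/5} N). \]
Step (3) costs $O(D^{1/2} \lg^2 N) = O(N^{1/5} \lg^2 N)$, which is negligible. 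For Step (4), with $m = \lceil N^{1/5} \lg^{6/5} N\rceil$, I compute
\[ \frac{N^{1/2}}{r^{1/2} m} = O\bigl(N^{1/2 - 1/10 - 1/5} \lg^{2/5 - 6/5} N\bigr) = O\bigl(N^{1/5} \lg^{-4/5} N\bigr), \]
so the first term of Proposition \ref{prop:search}'s bound contributes $O(N^{1/5} \lg^{16/5} N)$. The second term $r \lg^4 N = O(N^{1/5} \lg^{-4/5} N \cdot \lg^4 N) = O(N^{1/5} \lg^{16/5} N)$ matches, and the third term $m \lg^2 N = O(N^{1/5} \lg^{6/5} N \cdot \lg^2 N) = O(N^{1/5} \lg^{16/5} N)$ matches as well. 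Summing all contributions yields the claimed bound $O(N^{1/5} \lg^{16/5} N)$, which confirms that the parameter choices for $r$ and $m$ are precisely those that balance the three dominant terms.
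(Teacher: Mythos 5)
Your proposal is correct and follows essentially the same route as the paper: verify the preconditions of Propositions \ref{prop:small-factors}, \ref{prop:find-alpha} and \ref{prop:search} (in particular that $p > M$ forces \eqref{eq:p-condition} and that $N \geq 10^9$ gives $m \leq D < \ord_N(\alpha)$), then substitute $r$, $m$, $M$, $D$ into the stated complexity bounds and check that every term is $O(N^{1/5}\lg^{16/5} N)$. The only cosmetic nitpick is that, because of the ceiling in the definition of $m$, the clean way to phrase the order condition is $m \leq \lceil N^{2/5} \rceil = D < \ord_N(\alpha)$, which is exactly what your inequality $N^{1/5}\lg^{6/5}N \leq N^{2/5}$ delivers.
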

\begin{proof}
  In Step (2) we check for prime divisors $p \leq M$.
  Since $M = O(N^{2/5} \lg^{2/5} N)$,
  the cost of invoking Proposition \ref{prop:small-factors} is
    \[ O((N^{2/5} \lg^{2/5} N)^{1/2} \lg^3 N) = O(N^{1/5} \lg^{16/5} N). \]
  After this, we may assume that $N$ is either prime,
  or a semiprime satisfying \eqref{eq:p-condition}.
  
  In Step (3), observe that certainly $N^{2/5} \leq D \leq N$.
  The cost of invoking Proposition \ref{prop:find-alpha} is
  $O(N^{1/5} \lg^2 N)$, which is negligble.
  If Proposition \ref{prop:find-alpha} does not succeed in factoring $N$
  or proving $N$ prime,
  we obtain some $\alpha \in \ZZ_N^*$ with $\ord_N(\alpha) > D$.
  As $N \geq 10^9$ we have $N \geq (\log_2 N + 1)^6 \geq \lg^6 N$,
  so $N^{2/5} \geq N^{1/5} \lg^{6/5} N$ and therefore
  $\ord_N(\alpha) > D \geq m$.
  
  We have verified the preconditions of Algorithm \ref{algo:search},
  so Step (4) correctly returns the factorisation of $N$.
  According to Proposition \ref{prop:search},
  the cost of this step is
    \[ O\left(\left(\frac{N^{1/2}}{r^{1/2} m} + r\right) \lg^4 N + m \lg^2 N \right)
        = O(N^{1/5} \lg^{16/5} N). \qedhere \]
\end{proof}

Finally we may prove the main theorem, following \cite[\S6]{Hit-timespace}.
\begin{proof}[Proof of Theorem \ref{thm:main}]
  We are given as input an integer $N \geq 2$.
  We may reduce to the semiprime case as follows.
  We first apply Proposition \ref{prop:small-factors} to $N$ with
  $M_0 \defeq \lceil N^{1/3} \rceil$.
  If any prime divisor of $N$ is found, we remove it and denote the resulting
  integer by $N_1$.
  We then apply Proposition \ref{prop:small-factors} to $N_1$
  with $M_1 \defeq \lceil N_1^{1/3} \rceil$.
  We repeat this process until no more divisors are found.
  The number of prime divisors of $N$ is $O(\lg N)$,
  and $M_i \leq \lceil N^{1/3} \rceil$ for all $i$,
  so the cost of this procedure is
  $O((N^{1/6} \lg^3 N) \lg N) = N^{1/6+o(1)}$,
  which is negligible compared to $N^{1/5} \lg^{16/5} N$.
  Let $N'$ be the integer remaining at the end of the process.
  Then $1 \leq N' \leq N$ and $N'$ has no more than two prime divisors
  (including multiplicities).
  We may easily check if $N'$ is a square in time $O(\lg N \lg \lg N)$.
  If not, then $N'$ is either a prime or a semiprime.
  If $N' < 10^9$, we may factor $N'$ by any desired algorithm
  (for example trial division).
  Otherwise, we use Algorithm \ref{algo:semiprimes}.
  The desired complexity bound follows from Proposition \ref{prop:semiprimes}.
\end{proof}

\bibliographystyle{amsalpha}
\bibliography{onefifth}

\end{document}